\newcommand{\bA}{{\mathbb{A}}}
\newcommand{\bF}{{\mathbb{F}}}
\newcommand{\bP}{{\mathbb{P}}}
\newcommand{\bR}{{\mathbb{R}}}
\newcommand{\bZ}{{\mathbb{Z}}}
\newcommand{\Ba}{{\mathbf{a}}}
\newcommand{\Bb}{{\mathbf{b}}}
\newcommand{\Bh}{{\mathbf{h}}}
\newcommand{\Bt}{{\mathbf{t}}}
\newcommand{\Bx}{{\mathbf{x}}}
\newcommand{\By}{{\mathbf{y}}}
\newcommand{\Bz}{{\mathbf{z}}}
  \newcommand{\D}{{\mathcal{D}}}
  \newcommand{\G}{{\mathcal{G}}}
  \newcommand{\R}{{\mathcal{R}}}
\renewcommand{\S}{{\mathcal{S}}}
\newcommand{\rank}{\operatorname{rank}}
\newcommand{\sing}{\operatorname{sing}}
\newcommand{\upchi}{{\raise.35ex\hbox{$\chi$}}}
\newcommand{\Mod}[1]{\ (\mathrm{mod}\ #1)}
\newtheorem{theorem}{Theorem}[section]
\newtheorem{corollary}[theorem]{Corollary}
\newtheorem{lemma}[theorem]{Lemma}
\theoremstyle{definition}
\numberwithin{equation}{section}
\begin{document}

\title[An exponential sum modulo $p^2$]{A stratification result\\ for an exponential sum modulo $p^2$}

\author{Kostadinka Lapkova}
\address{Institute of Analysis and Number Theory\\
Graz University of Technology \\
Kopernikusgasse 24/II\\
8010 Graz\\
Austria}
\email{lapkova@math.tugraz.at}

\author{Stanley Yao Xiao}
\address{Department of Mathematics \\
University of Toronto \\
Bahen Centre \\
40 St. George St., Room 6290 \\
Toronto, Ontario, Canada M5S 2E4}
\email{stanley.xiao@math.toronto.edu}
\indent



\begin{abstract} In this note we consider algebraic exponential sums over the values of homogeneous nonsingular polynomials $F(x_1, \cdots, x_n) \in \bZ[x_1, \cdots, x_n]$ in the quotient ring $\bZ/p^2\bZ$. We provide an estimate of this exponential sum and a corresponding stratification of the space $\bA_{\bF_p}^n$, which in particular illustrates a general stratification theorem of Fouvry and Katz. 
\end{abstract}

\date\today

\maketitle


\section{Introduction}
Let $F(\Bx) \in \bZ[x_1, \cdots, x_n]$ be a polynomial of degree $d \geq 1$ and let $V_F$ be the hypersurface defined by $F(\Bx)=0$. Denote by $K$ a finite field or a quotient ring of size $q$. We are interested in the exponential sums 
\begin{equation} \label{main exp} \S_F(\Bh;K)=\sum_{\Bx \in V_F(K)} \exp \left(\frac{h_1 x_1 + \cdots + h_n x_n}{q}\right)\,,\end{equation}
where we use the notation $\exp(x):=e^{2\pi i x}$. 
In this note we address the estimate of $\S_F(\Bh,\bZ/p^2 \bZ)$ for large primes $p$.\\ 

A fundamental work in the estimation of general algebraic exponential sums is the paper of Fouvry and Katz \cite{FK}. When we drop a multiplicative character factor from the sums considered in \cite{FK}, and take not a general nontrivial additive character but only $\exp\pmod q$, we arrive at sums similar to $\S_F(\Bh;\bF_p)$, thus our result would be a special case of the sums considered in \cite{FK}, but with the modulus being squares of primes rather than primes. An analogous algebraic exponential sum was treated by Fouvry and our main theorem resembles \cite[Proposition 1.0]{F00} in the special case when the subscheme considered is a smooth hypersurface. \\

One significant aspect of the results of Fouvry and Fouvry-Katz \cite{F00, FK} is that they establish the existence of a decreasing filtration of subschemes (stratification) $\bA_{\bZ}^n\supset \G_1\supset\G_2\supset\ldots\supset\G_n$ of codimension $\geq j$, such that whenever $\Bh\notin\G_j(\bF_p)$ we have 
\begin{equation}\label{good}\S_F(\Bh,\bF_p)\ll p^{(n-1)/2+(j-1)/2}.
\end{equation} 
Thus the deviation from the bound $p^{(n-1)/2}$, which amounts to square-root cancellation and is the best that can be expected in general, is tamed by the fact that the dimensions of the subschemes $\G_j$ for $j\geq 1$ are small. However, the construction of this stratification is ineffective and inaccessible for non-specialists. Furthermore, the verifying of certain criteria that allow to apply the theorems of Fouvry-Katz \cite{FK} constitutes a separate problem, see for example \cite{devin}. Also see  \cite{xu} for a certain generalization of \cite{FK}. A further overview of the topic is given in the excellent review by P.~Michel \cite{michelMScN}.\\

On the other hand, Stepanov \cite{stepanov} focuses not on the stratification but rather constructs a subset of the good locus $\bar\G_1$ in which the optimal bound $\S_F(\Bh,\bF_q)\ll q^{(n-1)/2+\epsilon}$ holds for any finite field and not only for fields of prime order. Heath-Brown \cite{Heath-Brown85} treats the special case $q=p^2$ and $F\equiv \mathbb{1}$.\\

We show that in the specific case considered in (\ref{main exp}) the expected estimate holds and we give an explicitly constructed stratification.\\



For a given vector $\Bh \in \bZ^n$ consider the auxiliary affine variety $W_{F,\Bh}$ given by the equations defined over $\bF_p$
\begin{eqnarray}F(\By) &=& 0\nonumber\\ h_i \frac{\partial F}{\partial x_j} (\By) &=& h_j \frac{\partial F}{\partial x_i} (\By),\quad 1\leq i,j \leq n, i\neq j.
\end{eqnarray}
Note that this is the singular locus of the variety $\left\{F(\By)=0, \Bh\cdot\By=0 \right\}$. Now put $\G_{F,j}$ for the set 
given by
\begin{equation}\label{def G}\G_{F,j} = \{\Bh \in \bF_p^n : \dim(W_{F,\Bh}) \geq j\}.\end{equation}
We have the following theorem, which can be viewed as an extension to Theorem 1.2 of Fouvry-Katz \cite{FK} to $V(\bZ/p^2 \bZ)$ in a restricted setting. 

\begin{theorem} \label{thm} Let $F$ be a non-singular homogeneous polynomial in $\bZ[x_1, \cdots, x_n]$. Let $V_F$ be the hypersurface defined by $F$. Further, suppose that $F$ is non-singular modulo $p$. For $\Bh\in\bZ^n$ let $S(\Bh;p^2)$ be the exponential sum
	\begin{equation} \label{exp sum 0} S(\Bh; p^2) = \sum_{\Bx \in V_F(\bZ/p^2 \bZ)} \exp \left(\frac{h_1 x_1 + \cdots + h_n x_n}{p^2}\right). \end{equation}	
	
	Then the sets defined in \eqref{def G} are varieties satisfying $\G_{F,0}=\bA_{\bF_p}^n\supseteq \G_{F,1}\supseteq\ldots\supseteq\G_{F,n}$, where the codimension of each $\G_{F,j}$ is $\geq j$, and we have the estimate
              \[ S(\Bh; p^2)  = O_{F} \left(p^{n + j-2}\right)\]
	whenever $\Bh\notin \G_{F,j}(\bF_p)$.
\end{theorem}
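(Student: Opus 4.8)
The plan is to exploit the special structure of the modulus $p^2$ by lifting solutions from $\bF_p$. Write each $\Bx \in (\bZ/p^2\bZ)^n$ uniquely as $\Bx = \By + p\Bz$ with $\By \in \{0,\dots,p-1\}^n$ a fixed set of representatives and $\Bz \in (\bZ/p\bZ)^n$. A Taylor expansion of the polynomial gives $F(\By + p\Bz) \equiv F(\By) + p\sum_{i} z_i\, \partial F/\partial x_i(\By) \pmod{p^2}$, so the condition $F(\Bx) \equiv 0 \pmod{p^2}$ splits into $F(\By) \equiv 0 \pmod p$ together with the affine-linear condition $c(\By) + \sum_i z_i\, (\partial F/\partial x_i)(\By) \equiv 0 \pmod p$, where $p\,c(\By) \equiv F(\By) \pmod{p^2}$. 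Since the linear form also factors as $\Bh\cdot\Bx = \Bh\cdot\By + p\,\Bh\cdot\Bz$, I would rewrite $S(\Bh;p^2)$ as a sum over admissible $\By$ of $\exp(\Bh\cdot\By/p^2)$ times an inner additive-character sum over $\Bz$ constrained to the hyperplane above.

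Next I would evaluate the inner sum. Detecting the linear condition on $\Bz$ by orthogonality of additive characters and summing over $\Bz$, this inner sum vanishes unless $\Bh$ is a scalar multiple of $\nabla F(\By) \pmod p$, in which case its modulus is exactly $p^{n-1}$. The surviving $\By$ are therefore precisely those with $F(\By) \equiv 0$ and $\nabla F(\By)$ parallel to $\Bh$; but the parallelism relations $h_i\, \partial F/\partial x_j(\By) = h_j\, \partial F/\partial x_i(\By)$ are exactly the defining equations of $W_{F,\Bh}$, so the surviving $\By$ form $W_{F,\Bh}(\bF_p)$. The potential exception $\By = \mathbf{0}$, where $\nabla F$ vanishes, is harmless: by Euler's identity and the non-singularity of $F$ modulo $p$ (valid for $p \nmid d$) the gradient vanishes on $V_F$ only at the origin, and there the inner sum is nonzero only if $\Bh \equiv \mathbf{0} \pmod p$, which is excluded once $\Bh \notin \G_{F,j}(\bF_p)$ for $j \le n-1$ (note $\mathbf{0} \in \G_{F,j}$ since $W_{F,\mathbf{0}} = V_F$ has dimension $n-1$).

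Taking absolute values I would then bound $|S(\Bh;p^2)| \le p^{n-1}\, \#W_{F,\Bh}(\bF_p)$. A uniform point count for varieties of bounded degree (Lang--Weil, or an elementary Bezout/Schwartz--Zippel estimate, observing that the defining equations have degree at most $d$ in $\By$ independently of $\Bh$) gives $\#W_{F,\Bh}(\bF_p) = O_F(p^{\dim W_{F,\Bh}})$ with constant uniform in $\Bh$ and $p$. When $\Bh \notin \G_{F,j}(\bF_p)$ we have $\dim W_{F,\Bh} \le j-1$, whence $|S(\Bh;p^2)| = O_F(p^{n+j-2})$, as claimed; for $j = n$ this is the trivial bound, since $\#V_F(\bZ/p^2\bZ) \ll p^{2n-2}$.

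It remains to establish the structure of the $\G_{F,j}$. That they are closed and nested follows from upper semicontinuity of fibre dimension (Chevalley) applied to the family $W_{F,\Bh}$. For the codimension bound I would introduce the incidence variety $Z = \{(\Bh,\By) : \By \in W_{F,\Bh}\} \subset \bA_{\bF_p}^n \times \bA_{\bF_p}^n$ and project to the $\By$-factor: over each smooth point of $V_F$, that is $\By \ne \mathbf{0}$, the fibre is the line $\bA^1\cdot\nabla F(\By)$, so $\dim Z \le (n-1)+1 = n$ (the slice over $\By=\mathbf{0}$ contributes only a point to each $W_{F,\Bh}$, hence is irrelevant for $j \ge 1$). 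Projecting instead to the $\Bh$-factor, whose fibre over $\Bh$ is $W_{F,\Bh}$, and restricting to the locus where this fibre has dimension $\ge j$, the fibre-dimension theorem applied componentwise yields $\dim \G_{F,j} \le \dim Z - j \le n-j$, i.e.\ codimension $\ge j$. The main obstacle I anticipate is exactly this geometric step: controlling $\dim Z$ cleanly while separating off the origin, which lies in every $W_{F,\Bh}$, and ensuring the point-counting constant is genuinely uniform in both $\Bh$ and $p$, so that the dimension-only input coming from $\G_{F,j}$ converts into the stated power of $p$.
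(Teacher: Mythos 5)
Your proposal is correct and follows essentially the same route as the paper: the same lifting $\Bx = \By + p\Bz$ with Taylor expansion, the same evaluation of the inner additive character sum (vanishing unless $\Bh$ and $\nabla F(\By)$ are proportional, giving $|S(\Bh;p^2)| \le p^{n-1}\,\# W_{F,\Bh}(\bF_p)$ and then Lang--Weil), and the same incidence-variety plus fibre-dimension-theorem argument for $\dim \G_{F,j} \le n-j$, which you carry out affinely in one step where the paper works projectively (fibres of the projection to $X$ being single points, so $\dim I = n-2$) and then shifts dimensions by one to pass to the affine setting. If anything, your treatment of the origin $\By = \mathbf{0}$, where $\nabla F$ vanishes and the inner sum can be as large as $p^n$, is more explicit than the paper's.
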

The interest in this theorem is that unlike in the situation considered by Fouvry and Katz, a priori we have relatively little access to tools from algebraic geometry. This is because the base ring we consider is $\bZ/p^2 \bZ$ which is not a field, so many of the basic facts one takes for granted from algebraic geometry do not necessarily hold. Nevertheless, we are able to reduce to a case where we once again consider things over a field. \\


When $n = 2$ and $F$ is non-singular of degree $d\geq 2$, we have $\dim W_{F, \Bh} = 1$ if and only if the equation $h_2 \frac{\partial F}{\partial x_1}(\By) = h_1 \frac{\partial F}{\partial x_2} (\By)$ is redundant. If $h_1, h_2$ are not both zero then this implies that $F$ shares a component with a curve of strictly smaller degree, which violates the assumption that $F$ is non-singular. Therefore the only way to have $\dim W_{F,\Bh} = 1$ is for $h_1 = h_2 = 0$ and so any non-singular $F \in \bZ[x_1, x_2]$ we have square-root cancellation, and obtain the bound
\[S(\Bh;p^2)\ll p^{n-1}.\] 
We will soon see in the proof that this essentially follows from the Weil bound resolving the Riemann Hypothesis for curves, where for any (higher) dimension of $V_F$ we apply the theorem of Lang-Weil. \\

In many sieving problems (see for example \cite{Hoo1}) one applies a result like that of Fouvry-Katz \cite{FK} to use congruence conditions modulo somewhat large primes to show paucity of contributions, which results in sums of the shape
\[\sum_{x_1 < p < x_2} \left(\frac{B^n}{p} + 1 \right).\]
Notice that the sum over the lead terms $1/p$ diverges, so in such a case the size of $x_1$ is not relevant. On the other hand, if we apply our Theorem instead, then one would end up with a sum over $1/p^2$ which is then convergent, so the sum would then be sensitive to the size of $x_1$. This is crucial if one wishes to obtain asymptotic formulae. Similar arguments could lead to some of the potential applications of our theorem.

\section{Proof of Theorem \ref{thm}}
\label{proof}

Write $\Bx = \By + p \Bz$. Then we have
\begin{equation} F(\Bx) = F(\By) + p \left(\sum_{i=1}^n z_i \frac{\partial F}{\partial x_i} (\By) \right) + p^2 H_F(\By; \Bz)\equiv F(\By)+p\Bz\cdot\nabla F(\By)\pmod {p^2}.
\end{equation}
Now restrict $\By \in \bF_p$. Then $\By + p \Bz \in V\left(\bZ/p^2 \bZ\right)$ if and only if $y\in V(\bF_p)$ and 
\[\Bz\cdot\nabla F(\By) \equiv - F(\By)/p \pmod{p}.\]
Then (\ref{exp sum 0}) becomes 
\[\sum_{\By \in V(\bF_p)} \exp\left(\frac{h_1 y_1 + \cdots + h_n y_n}{p^2}\right) \sum_{\substack{\Bz\Mod p\\ \Bz\cdot\nabla F(\By)\equiv - F(\By)/p \Mod{p}}} \exp \left(\frac{h_1 z_1 + \cdots + h_n z_n}{p}\right). \]
We examine the inner sum in detail. Let us introduce the notation
\[T_p(\Ba,\Bb,c):=\sum_{\substack{\Bz\Mod p\\ \Bb\cdot\Bz\equiv c\Mod p}} \exp\left(\frac{\Ba\cdot\Bz}{p}\right).\]

We claim that the exponential sum $T_p(\Ba,\Bb,c)=0$ every time there exists a pair $(i,j)$ with $1\leq i,j\leq n$ and $i\neq j$, such that $a_ib_j\neq a_jb_i\Mod p$. Indeed, if the latter holds, we can assume that $b_i\neq 0$, since $b_i=b_j=0$ contradicts the assumption. We can assume that $i=n$. We write 
\[z_n\equiv -b_n^{-1}\left(b_1z_1+\ldots+b_{n-1}z_{n-1}-c\right)\Mod p\] 
and then $T_p(\Ba,\Bb,c)$ equals 
\begin{eqnarray*}\sum_{z_1,\ldots,z_{n-1}\Mod p} \exp\left(\frac{a_1z_1+\ldots a_{n-1}z_{n-1}-a_nb_n^{-1}\left(b_1z_1+\ldots+b_{n-1}z_{n-1}-c\right)}{p}\right)
	=\\
	\sum_{z_1,\ldots,z_{n-1}\Mod p} \exp\left(\frac{\sum_{1 \leq j \leq n-1}(a_j-a_nb_n^{-1}b_j)z_j+a_nb_n^{-1}c}{p}\right).
\end{eqnarray*}
As we assumed that for some $j\in\left[1,n-1\right]$ we have $a_j-a_nb_n^{-1}b_j\not\equiv 0\pmod p$, it follows that $T_p(\Ba,\Bb,c)=0$. On the other hand, if $a_ib_j\equiv a_jb_i\Mod p$ for every $1\leq i,j\leq n$ and $i\neq j$, we have trivially $|T_p(\Ba,\Bb,c)|= p^{n-1}$. \\

Now we conclude that \eqref{exp sum 0} is equal to
\[\sum_{\By \in V(\bF_p)} \exp\left(\frac{\Bh\cdot \By}{p^2}\right) T_p(\Bh,\nabla F(\By),-F(\By)/p)\]
and this is bounded from above by 
\[p^{n-1}\#\left\lbrace \By\in\bF_p^n : F(\By)=0, \rank \left(\begin{array}{c} \Bh\\ \nabla F(\By)\end{array}\right)<2\right\rbrace=p^{n-1}\sum_{\By \in W_{F, \Bh}(\bF_p)} 1.\]
%
If $\Bh \notin \G_{F,j}(\bF_p)$, then 
\[\dim (W_{F,\Bh}) \leq j-1.\]
By the Lang-Weil theorem, it follows that
\[\sum_{\By \in W_{F, \Bh}(\bF_p)} 1 \ll_{d,n} p^{j-1}\]
and the estimate of the exponential sum $S(\Bh,p^2)$ follows. It remains to deal with the dimensions of the sets $\G_{F,j}$.\\

The following lemma is reminiscent of Heath-Brown's Lemma 2 in \cite{Heath-Brown}, proved using the method of dimension counting via incidence geometry. In \cite{Heath-Brown} he constructs a stratification similar to ours, but for exponential sums modulo $p$. The lemma will help us estimate the dimensions of $\G_{F,j}$, which turn out to be affine varieties.
\begin{lemma}\label{dimG} Let $F\in K\left[x_1,\ldots,x_n\right]$ be a non-singular homogenous polynomial of degree $d$ in $n$ variables defined over a field $K$. For a given $\Bz\in K^n$ let \[V_{\Bz}=\sing (X\cap H_\Bz),\] where $X=\lbrace F(\Bx)=0\rbrace\subset \bP_K^{n-1}$ and $H_\Bz=\lbrace \Bz\cdot\Bx=0\rbrace\subset \bP_K^{n-1}$. Let 
	\[T_s=\lbrace z=[\Bz]\in\bP_K^{n-1}: \dim(V_\Bz)\geq s\rbrace,\quad s\geq -1. \]
	Then $T_s$ is a projective variety of dimension at most $n-2-s$.	 
\end{lemma}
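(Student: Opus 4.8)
The plan is to recognise each $V_\Bz$ as a fibre of the Gauss map of $X$ and then to bound the locus of large fibres by a fibre-dimension count via incidence geometry, in the spirit of the Heath--Brown-style argument mentioned above. First I would make the description of $V_\Bz$ explicit. Since $F$ is non-singular, the gradient $\nabla F$ has no common zero on $X$ by the projective Jacobian criterion, so the Gauss map $\gamma\colon X\to\bP_K^{n-1}$, $p\mapsto[\nabla F(p)]$, is a morphism. A point $p\in X\cap H_\Bz$ is a singular point of the hyperplane section precisely when the Jacobian matrix of the pair $(F,\ \Bz\cdot\Bx)$ has rank less than $2$ at $p$, that is, when $\nabla F(p)$ and $\Bz$ are proportional; this is the projective form of the rank condition defining $W_{F,\Bh}$. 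If $\nabla F(p)=\lambda\Bz$ with $\lambda\neq0$, then Euler's relation gives $\Bz\cdot p=\lambda^{-1}\nabla F(p)\cdot p=\lambda^{-1}d\,F(p)=0$, so the incidence $p\in H_\Bz$ is automatic. Hence $V_\Bz=\gamma^{-1}([\Bz])$ and $T_s=\{[\Bz]:\dim\gamma^{-1}([\Bz])\ge s\}$.

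Next I would use the upper semicontinuity of fibre dimension. The set $X_{\ge s}=\{p\in X:\dim_p\gamma^{-1}(\gamma(p))\ge s\}$ is then closed in $X$, so $\dim X_{\ge s}\le\dim X=n-2$. Because $X$ is projective the morphism $\gamma$ is proper, and one checks directly that $T_s=\gamma(X_{\ge s})$; thus $T_s$ is closed in $\bP_K^{n-1}$, i.e.\ a projective variety, and the restriction $\gamma|_{X_{\ge s}}\colon X_{\ge s}\to T_s$ is surjective with every fibre of dimension $\ge s$.

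Finally I would invoke the fibre-dimension inequality: if $A\to B$ is a surjective morphism of varieties all of whose fibres have dimension $\ge s$, then $\dim A\ge\dim B+s$. Applying this to $\gamma|_{X_{\ge s}}$ gives $\dim T_s+s\le\dim X_{\ge s}\le n-2$, which is the desired bound $\dim T_s\le n-2-s$.

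The hard part will be justifying this last inequality cleanly, since it is not literally the fibre-dimension theorem for a single dominant morphism of irreducible varieties: a component of $A$ can meet a given fibre in something of dimension strictly smaller than the full fibre. The way I would get around this is to pass to the generic point of a top-dimensional irreducible component $B_0$ of $B$; over such a generic point only the components of $A$ that dominate $B_0$ meet the fibre, and each contributes its generic fibre dimension $\dim(\text{component})-\dim B_0$, so the hypothesis forces some dominating component to have dimension at least $\dim B_0+s$. I would also flag that the identification of the singular locus uses only that $\nabla F\neq0$ on $X$, and that the whole count is characteristic-free, relying solely on $\dim X=n-2$ and general fibre-dimension theory, so that it applies over an arbitrary field $K$ even though in positive characteristic the Gauss map itself may be degenerate.
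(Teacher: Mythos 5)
Your proof is correct, and the underlying dimension count coincides with the paper's: the paper's incidence variety $I=\lbrace (z,\Bx)\in\bP_K^{n-1}\times X : \Bz \text{ proportional to } \nabla F(\Bx)\rbrace$ is precisely the graph of your Gauss map $\gamma$ (the paper shows its projection to $X$ has singleton fibres, so $\dim I=\dim X=n-2$), and the paper then applies the same fibre-dimension inequality to the other projection over each irreducible component $U$ of $T_s$, obtaining $n-2\ge s+\dim U$. What your route genuinely adds is the qualitative half of the statement: the lemma asserts that $T_s$ \emph{is} a projective variety, and the paper's proof never establishes that $T_s$ is Zariski closed (it tacitly assumes this when it picks a component $U$), whereas you derive closedness from Chevalley's upper semicontinuity of fibre dimension (making $X_{\ge s}$ closed in $X$) together with properness of $\gamma$, via $T_s=\gamma(X_{\ge s})$. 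You also spell out the reducible-source/reducible-base version of the fibre-dimension inequality that both arguments in fact need, and your component-by-component treatment does not require irreducibility of $X$ or $I$, which the paper invokes. Two minor caveats: the identification $T_s=\gamma(X_{\ge s})$ breaks down in the degenerate case $s=-1$ (with the convention $\dim\emptyset=-1$ one has $T_{-1}=\bP_K^{n-1}$, not $\gamma(X)$), but the asserted bound $n-2-s=n-1$ is trivial there; and your Euler-relation step, valid in any characteristic since $\nabla F(\Bx)\cdot\Bx=dF(\Bx)=0$ on $X$, is the cleanest justification that $V_\Bz$ equals the full fibre $\gamma^{-1}([\Bz])$ --- the paper's argument only ever uses the inclusion of $V_\Bz$ into the fibre of its projection, for which Euler's identity is not needed.
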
 
\begin{proof}
	Consider the variety 
	\[I=\left\lbrace(z,\Bx)\in\bP_K^{n-1}\times X : \rank \left(\begin{array}{c} \Bz\\ \nabla F(\Bx)\end{array}\right)<2\right\rbrace.\]
	The projection $\pi_2:I\rightarrow X$ is onto, since $[\nabla F(\Bx)]$ is in the fibre for every $\Bx\in X$. Since $X$ is non-singular we have $\nabla F(\Bx)\neq 0$ for every $\Bx\in X$, therefore we have the condition \[ \rank \left(\begin{array}{c} \Bz\\ \nabla F(\Bx)\end{array}\right)=1, \]
	i.e. each minor in the matrix equals $0$. It is easy to see that this implies that any other $z\in\bP^{n-1}$ in the same fibre is such that $\Bz$ is proportional to $\nabla F(\Bx)$, i.e. $[z]=[\nabla F(\Bx)]$. This means that the fibre consists of a single point. Hence $I$ is irreducible and by the theorem on the dimension of fibres we get
	\[\dim(I)=\dim(\pi_2^{-1}(\Bx))+\dim(X)=0+n-2=n-2.\]
	Let $U$ be an irreducible component of $T_s$ and consider the projection 
	\[\pi_1:I\rightarrow U.\]
	Then 
	\[n-2=\dim(I)\geq s+\dim(U),\]
	which yields $\dim(T_s)\leq n-2-s$.

\end{proof}

We will also need the following estimate of the dimensions of $\G_{F,j}$.
\begin{lemma}
	For any homogenous and non-singular polynomial $F$ and $j\geq 0$ we have
	\[\dim(\G_{F,j}(\bA_{\bF_p}^n))\leq n-j\,.\]
\end{lemma}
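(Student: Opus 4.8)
The plan is to realize both $W_{F,\Bh}$ and $\G_{F,j}$ as affine cones and to reduce the statement to the projective dimension bound already established in Lemma \ref{dimG}. Since $F$ is homogeneous, the equations defining $W_{F,\Bh}$, namely $F(\By)=0$ together with $h_i\,\partial F/\partial x_j(\By)=h_j\,\partial F/\partial x_i(\By)$, are homogeneous in $\By$, so $W_{F,\Bh}$ is a cone in $\bA_{\bF_p}^n$. Likewise the condition $\dim(W_{F,\Bh})\geq j$ is unchanged when $\Bh$ is scaled by a nonzero factor, so $\G_{F,j}$ is a cone in the $\Bh$-space $\bA_{\bF_p}^n$. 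The strategy is therefore to pass to projectivizations, identify the projectivization of $W_{F,\Bh}$ with the variety $V_\Bh$ of Lemma \ref{dimG}, and then add back the one dimension lost in passing from a cone to its projectivization.

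The key step is the identification of $V_{\Bh}=\sing(X\cap H_{\Bh})$ with the projectivization of $W_{F,\Bh}$. Because $F$ is non-singular, $X=\{F=0\}\subset\bP_{\bF_p}^{n-1}$ is smooth, so $\nabla F(\Bx)\neq 0$ for every $[\Bx]\in X$; hence the singular points of the intersection $X\cap H_{\Bh}$ are exactly those $[\Bx]$ with $F(\Bx)=0$, $\Bh\cdot\Bx=0$, and $\rank\left(\begin{smallmatrix}\Bh\\ \nabla F(\Bx)\end{smallmatrix}\right)<2$, i.e.\ the tangency points where $\Bh$ is proportional to $\nabla F(\Bx)$. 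I would then observe that the hyperplane condition $\Bh\cdot\Bx=0$ is redundant on this locus: if $\Bh=\lambda\,\nabla F(\Bx)$, then Euler's identity $\Bx\cdot\nabla F(\Bx)=d\,F(\Bx)$ gives $\Bh\cdot\Bx=\lambda d\,F(\Bx)=0$ whenever $F(\Bx)=0$. Consequently
\[ V_{\Bh}=\left\{[\Bx]\in\bP_{\bF_p}^{n-1}: F(\Bx)=0,\ \rank\left(\begin{array}{c}\Bh\\ \nabla F(\Bx)\end{array}\right)<2\right\}, \]
which is precisely the projectivization of the cone $W_{F,\Bh}$.

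From this identification the dimensions match up as $\dim(W_{F,\Bh})=\dim(V_{\Bh})+1$, with the convention $\dim(\emptyset)=-1$ accounting for the case $W_{F,\Bh}=\{0\}$. Hence $\dim(W_{F,\Bh})\geq j$ if and only if $\dim(V_{\Bh})\geq j-1$, so the projectivization of the cone $\G_{F,j}$ is exactly the variety $T_{j-1}$ of Lemma \ref{dimG} (taking $\Bz=\Bh$ and $s=j-1$). Lemma \ref{dimG} then yields $\dim(T_{j-1})\leq n-2-(j-1)=n-1-j$, and restoring the one cone dimension gives $\dim(\G_{F,j})=\dim(T_{j-1})+1\leq n-j$, as claimed; the extreme case $j=0$ corresponds to $T_{-1}=\bP_{\bF_p}^{n-1}$ and $\G_{F,0}=\bA_{\bF_p}^n$, consistent with the bound.

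I expect the only genuine obstacle to be the identification step: one must verify carefully that the singular locus of $X\cap H_{\Bh}$ is cut out by the rank condition (using smoothness of $X$, so that $\nabla F$ never vanishes) and that Euler's relation removes the hyperplane equation, so that $V_{\Bh}$ agrees \emph{on the nose} with the projectivized $W_{F,\Bh}$ rather than merely containing it. Once this is in place, the cone–projectivization bookkeeping and the invocation of Lemma \ref{dimG} are routine.
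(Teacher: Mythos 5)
Your proof is correct and follows essentially the same route as the paper's: identify $W_{F,\Bh}$ with the affine cone over $V_{\Bh}=\sing(X\cap H_{\Bh})$, use the relation $\dim(W_{F,\Bh})=\dim(V_{\Bh})+1$ to see that the projectivization of $\G_{F,j}$ is exactly $T_{j-1}$, and then apply Lemma \ref{dimG} and add one for the cone dimension. Your explicit verification via Euler's identity that the hyperplane equation is redundant on the rank locus is a welcome detail --- the paper simply asserts this identification (``Note that this is the singular locus of the variety $\{F(\By)=0,\ \Bh\cdot\By=0\}$'') without proof --- but it does not change the substance of the argument.
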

\begin{proof}
	When we regard $X$ and $H_\Bz$ over $\bA_{\bF_p}^n$ as affine varieties, and respectively the affine singular locus of their intersection $\sing(X\cap H_\Bz)(\bA_{\bF_p}^n)=V_{\Bz}(\bA_{\bF_p}^n)$, we have $\dim(V_{\Bz}(\bA_{\bF_p}^n))=\dim(V_{\Bz})+1$. 
	Thus if $[\Bz]\in T_s$ for $s\geq -1$, we have $\dim(V_\Bz)\geq s$ and respectively $\dim(W_{F,\Bz}(\bA_{\bF_p}^n))\geq s+1$, so any of the corresponding $\Bz\in\bF_p^n$ satisfies $\Bz\in \G_{F,s+1}$. As $\dim(T_s)\leq n-2-s$ and $T_s$ is exactly $\G_{F,s+1}(\bP_{\bF_p}^{n-1})$, we have $\dim(\G_{F,s+1})\leq (n-2-s)+1=n-(s+1)$. We conclude that for $j\geq 0$ we have $\dim(\G_{F,j}(\bA_{\bF_p}^n))\leq n-j$.
	
	
\end{proof}
This completes the proof of Theorem \ref{thm}.


\subsection*{Acknowledgements} We are grateful to Prof. Tim Browning for helping with Lemma \ref{dimG}. K. Lapkova is supported by a Hertha Firnberg grant [T846-N35] of the  Austrian Science Fund (FWF).


\end{document}